\newcommand{\MF}{\mathbb{F}}
\newcommand{\compare}{\stackrel{\triangle}{=}}
\newcommand{\etal}{{\it et~al.}}
\title{Computing the Number of Finite Field Elements\\ with Prescribed Trace and Co-trace}
\author{
        Assen Bojilov\inst{1} and
        Lyubomir Borissov\inst{1,2} and
        Yuri Borissov\inst{2}
       }
\institute{$^1$ Faculty of Mathematics and Informatics, Sofia University, J. Baucher Str. 2, Sofia, Bulgaria,\\
$^2$ Institute of Mathematics and Informatics, BAS, G. Bontchev Str. 8, 1113 Sofia,
Bulgaria.\\
  \vspace{1mm}
  \email{  bojilov@fmi.uni-sofia.bg, lborisov@fmi.uni-sofia.bg, youri@math.bas.bg}\\
  \vspace{0.5cm}
  {\it Dedicated to Prof. Tor Helleseth's 70th Birthday}
  }
\begin{document}

\maketitle
\pagestyle{plain}

\begin{abstract}\noindent
In this paper, we address the problem for determining the number of finite field elements with prescribed trace and co-trace in case of arbitrary characteristic $p$. We show that this problem can be reduced to solving a system of $p-1$ linear equations with matrix of coefficients a slight modification of circulant matrix formed by the Kloosterman sums over the field $\MF_{p}$. The presented approach is illustrated in the cases of characteristic $p = 2,3$ and $5$.
\end{abstract}

{\bf keywords:} trace function,  Kloosterman sum, circulant matrix, asymptotic behavior,\\ \hspace*{2.45cm}Andr\'e Weil bound.

\section{\bf Introduction}
\label{sect0}
The finite fields play an important role in coding theory, sequence design and cryptography.
One of the most useful tool for investigations in these scientific disciplines is the so-called trace function over finite field.
For a plenty of results involving this notion, we refer to \cite{Handbook}.

Some less-known applications of the trace are those considered in \cite{Dodu86} and \cite{Nied90}.
More specifically, S.~M. Dodunekov has proved the quasiperfectness of three classes of double-error correcting codes  using essentially the fact that in every binary field with degree of extension at least $3$ there exists a non-zero element whose trace is opposite to the trace of its inverse \cite{Dodu86}.
Also,  H. Niederreiter in his efforts  to establish a formula for the number of binary irreducible polynomials of given degree with second and next to the last coefficient both equal to $1$, has obtained as a by-product an expression for the cardinality of non-zero elements of a given binary field with property that their traces and the traces of inverses of them are equal to $1$  \cite{Nied90}.

Of course, the above mentioned two topics can be considered from a common perspective. So, the goal of this paper is to extend and generalize these earlier results in case of the finite fields with characteristic greater than $2$.

The paper is organized as follows. In the next section, we recall the background needed to present our results. Then in Section \ref{sect2}, we describe our approach for computing the cardinality of finite field elements with prescribed trace and co-trace in case of arbitrary characteristic. In Section \ref{sect3}, the asymptotic behavior of the quantities of interest is investigated. The examples for small values of characteristic are given in Section \ref{sect4}. Finally, some conclusions are drawn in the last section.

\section {\bf Preliminaries}
\label{sect1}
Let $\MF_{q}$ be the finite field of characteristic $p$ and order $q = p^{m}$, and let $\MF_{q}^{*}$ stand for the multiplicative group of $\MF_{q}$.
\begin{definition} The {\em trace} of an element  $\gamma$ \ in  $\MF_q$ over  $\MF_{p}$  is equal to
\begin {equation*}
tr(\gamma)= \gamma+ \gamma^{p}+ ... + \gamma^{p^{m-1}}
\end{equation*}
The {\em co-trace} of an element  $\gamma$ \ in  $\MF_q^{*}$  is equal to $tr(\gamma^{-1})$.
\end{definition}
It is well-known that the traces lie in the prime field $\MF_{p}$, and the number of elements in $\MF_{q}$ with fixed trace equals $q/p$ (see, e.g.  \cite[Ch. 4.8]{McWSl}).

\medskip

\noindent
For arbitrary $i,j \in \MF_{p}$ (with slight abuse in notations) we introduce the following:
\[
{ T_{ij} = |\{ x \in \MF_{q}^{*}:tr(x)=i, tr(x^{-1})=j)\}|},
\]
\noindent
i.e. $T_{ij}$ stands for the number of non-zero elements of $\MF_{q}$ with trace $i$ and co-trace $j$.

\medskip

\noindent
As it is prompted in the Introduction, we search for an approach to finding out closed-form formulae for {\rm $T_{ij}$} in terms of $m$ and $p$ when the characteristic $p$ is arbitrary.

\medskip

\noindent
We need as well the notion of {\em Kloosterman sums} for finite fields defined as follows
\begin {definition}\label{Klsum} (see, e.g. \cite{Car}) For each $u \in \MF_{q}^{*}$
\[
{\cal K}^{(m)} (u)= \sum_{x\in  \MF_{q}^{*}} \omega^{\ tr(x+ \frac{u}{x})},
\]
where $\omega=e^{\frac{2\pi i}p}$.
\end {definition}
\noindent

\medskip

\noindent
The crucial fact, we make use of, is inferred by the main result of  L. Carlitz' work from 1969. Namely, if $u \in \MF^{*}_{p}$,~ the Kloosterman sum
${\cal K}^{(m)}(u)$ is explicitly expressible in terms of the degree of field extension $m$, the characteristic $p$ and the Kloosterman sum ${\cal K}(u) \compare {\cal K}^{(1)}(u)$. More specifically, the following statement stated hereinafter as a proposition, is valid:
\begin{proposition}\label{Carlitz}
(see, e.g. \cite [Eq. 1.3]{Car}) For arbitrary $u \in \MF^{*}_{p}$, it holds:
\[
{\cal K}^{(m)} (u) = (-1)^{m-1} 2^{1-m} \sum_{2r \leq m}{m \choose 2r}{\cal K}^{m-2r}(u)\big({\cal K}^{2}(u) - 4p\big)^{r}
\]
\end{proposition}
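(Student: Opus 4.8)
The plan is to reduce the statement to a clean power-sum identity and then finish with a routine binomial expansion. Write $\mathcal{K} = \mathcal{K}(u)$ and introduce the two (complex) roots $\alpha,\beta$ of the quadratic $X^{2} + \mathcal{K}\,X + p = 0$, so that $\alpha + \beta = -\mathcal{K}$ and $\alpha\beta = p$. The Andr\'e Weil bound $|\mathcal{K}| \le 2\sqrt{p}$ forces the discriminant $\mathcal{K}^{2} - 4p$ to be non-positive, so $\alpha$ and $\beta$ are complex conjugates of modulus $\sqrt{p}$. The whole proposition then reduces to the single lifting identity
\[
\mathcal{K}^{(m)}(u) = -\bigl(\alpha^{m} + \beta^{m}\bigr), \qquad m \ge 1 ,
\]
which expresses the Kloosterman sum over $\MF_{p^{m}}$ as (minus) the $m$-th power sum of the two fixed roots attached to the prime field.

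The heart of the argument is establishing this identity, and I would do it through the three-term recurrence
\[
\mathcal{K}^{(m+1)}(u) = -\mathcal{K}(u)\,\mathcal{K}^{(m)}(u) - p\,\mathcal{K}^{(m-1)}(u),
\]
whose characteristic polynomial is precisely $X^{2} + \mathcal{K}\,X + p$. Since the sequence $-(\alpha^{m}+\beta^{m})$ satisfies this recurrence, it suffices to match the base cases: $m=1$ is immediate from $\mathcal{K}^{(1)} = \mathcal{K} = -(\alpha+\beta)$, and $m=2$ follows from a short direct computation giving $\mathcal{K}^{(2)}(u) = -\mathcal{K}^{2} + 2p = -(\alpha^{2}+\beta^{2})$; induction then delivers the identity for all $m$. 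Producing the recurrence itself is the real content (and the main obstacle): one either manipulates the defining exponential sums directly (as in Carlitz's original treatment), or argues conceptually that $\mathcal{K}^{(m)}(u) = -\mathrm{Tr}(\mathrm{Frob}^{m})$ on a rank-two object of determinant $p$, so that the Kloosterman sums are bona fide power sums of two Frobenius eigenvalues whose product is $p$; the Weil bound is then exactly the statement that these eigenvalues have modulus $\sqrt{p}$. An alternative is to pass through Gauss sums and invoke the Davenport--Hasse lifting relation, though the re-summation is messier.

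Granting the identity, the conclusion is pure bookkeeping. Substituting $\alpha,\beta = \bigl(-\mathcal{K} \pm \sqrt{\mathcal{K}^{2}-4p}\bigr)/2$ and expanding $\alpha^{m}+\beta^{m}$ by the binomial theorem, the odd powers of $\sqrt{\mathcal{K}^{2}-4p}$ cancel between the two terms, leaving
\[
\alpha^{m} + \beta^{m} = (-1)^{m}\,2^{1-m} \sum_{2r \le m} \binom{m}{2r} \mathcal{K}^{m-2r}\bigl(\mathcal{K}^{2} - 4p\bigr)^{r}.
\]
Multiplying by $-1$ turns the leading sign into $(-1)^{m-1}$, which is exactly the claimed formula. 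Thus the only genuinely nontrivial step is the lifting identity of the second paragraph; once it is in hand, the explicit closed form is forced.
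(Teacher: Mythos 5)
The paper does not prove this proposition at all: it is imported verbatim from Carlitz's 1969 paper (cited as Eq.~1.3 there) and used as a black box, so there is no internal argument to compare yours against. Judged on its own terms, your reduction is the standard and correct one: setting $\alpha,\beta$ to be the roots of $X^{2}+\mathcal{K}X+p$ and expanding $\alpha^{m}+\beta^{m}$ binomially does reproduce the stated closed form exactly (the signs check out: $-(\alpha^{m}+\beta^{m})=(-1)^{m-1}2^{1-m}\sum_{2r\le m}\binom{m}{2r}\mathcal{K}^{m-2r}(\mathcal{K}^{2}-4p)^{r}$, and the $m=1,2$ cases agree). That final bookkeeping step is complete and unobjectionable.

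The gap is the one you yourself flag: the lifting identity $\mathcal{K}^{(m)}(u)=-(\alpha^{m}+\beta^{m})$, equivalently the three-term recurrence $\mathcal{K}^{(m+1)}=-\mathcal{K}\,\mathcal{K}^{(m)}-p\,\mathcal{K}^{(m-1)}$, is asserted rather than established. Everything nontrivial in the proposition lives there, and your proposal offers only pointers (Carlitz's exponential-sum manipulations, Frobenius eigenvalues of a rank-two object, or Davenport--Hasse via Gauss sums) without carrying any of them out; even the $m=2$ base case $\mathcal{K}^{(2)}(u)=-\mathcal{K}^{2}+2p$ is labelled a ``short direct computation'' but not performed, and it genuinely requires an argument about how traces and norms from $\MF_{p^{2}}$ to $\MF_{p}$ distribute. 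So what you have is a correct and well-organized reduction of the proposition to the Hasse--Davenport-type lifting relation, not a proof of it. As a side remark, the appeal to the Weil bound to conclude $\mathcal{K}^{2}-4p\le 0$ is harmless but unnecessary: the binomial identity holds as a polynomial identity in $\mathcal{K}$ regardless of the sign of the discriminant.
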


\medskip

\noindent
It is deserved mentioning that there are other explicit expressions of this kind \cite{Zin} (in particular, by using the Dickson polynomials \cite{MoiRan}).
\medskip

\noindent
In addition, we exploit another three facts which can be found, for instance, in a slightly modified form in Lehmers' work from $1967$:
\begin{proposition}\label{Lehmer}
(see, \cite [Eqs.  1.9,  3.7 and 3.6, respectively]{Lehmer})
\[
{\rm (i)}\hspace{0.5cm} \sum_{u = 1}^{p-1} {\cal K} (u) = 1.
\]
\[
{\rm (ii)}\hspace{0.5cm} \sum_{u = 1}^{p-1} {\cal K}^{2}(u) = p^{2} - p - 1,
\]
and if $p > 2$
\[
{\rm (iii)}\hspace{0.5cm} \sum_{u = 1}^{p-1} {\cal K}(u) {\cal K}(cu) = -p - 1
\]
 for any $c \not = 1$ in $\MF^{*}_{p}$.
\end{proposition}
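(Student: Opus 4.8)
The plan is to establish all three identities by a single mechanism, with no recourse to the Weil bound or any deep structural input. Since the ground field here is the prime field $\MF_p$, the trace over $\MF_p$ is the identity map, so that $\mathcal{K}(u) = \sum_{x \in \MF_p^*} \omega^{x + u x^{-1}}$. The only external tool needed is the elementary orthogonality of the additive character $u \mapsto \omega^u$:
\[
\sum_{u=1}^{p-1} \omega^{au} = \begin{cases} p-1, & a \equiv 0 \pmod p, \\ -1, & a \not\equiv 0 \pmod p, \end{cases}
\]
which follows immediately from $\sum_{u=0}^{p-1}\omega^{au}=0$ whenever $a\not\equiv 0$. In each case I would expand the Kloosterman sum(s), interchange the order of summation so as to sum over $u$ first, and then read off the inner sum from the display above.

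For (i) I would write $\sum_{u=1}^{p-1} \mathcal{K}(u) = \sum_{x \in \MF_p^*} \omega^x \sum_{u=1}^{p-1} \omega^{u x^{-1}}$. For each fixed $x \neq 0$ one has $x^{-1}\neq 0$, so the inner sum equals $-1$, and the whole expression collapses to $-\sum_{x \neq 0}\omega^x = -(-1) = 1$.

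For (ii) and (iii) I would expand the product as a double sum over $x,y \in \MF_p^*$ and again sum over $u$ first. For (ii),
\[
\sum_{u=1}^{p-1} \mathcal{K}^2(u) = \sum_{x,y} \omega^{x+y} \sum_{u=1}^{p-1}\omega^{u(x^{-1}+y^{-1})},
\]
where the inner sum equals $p-1$ exactly on the locus $y=-x$ and $-1$ off it. On the locus $\omega^{x+y}=1$, giving a contribution $(p-1)\cdot(p-1)=(p-1)^2$; the off-locus contribution is $-\bigl(\sum_{x,y\neq 0}\omega^{x+y}-(p-1)\bigr)=-(1-(p-1))=p-2$, and the two sum to $p^2-p-1$. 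For (iii) the vanishing condition becomes $x^{-1}+c\,y^{-1}=0$, i.e. $y=-cx$, so along this locus $x+y=(1-c)x$; since $c\neq 1$ the factor $1-c$ is invertible, whence $(1-c)x$ ranges over all of $\MF_p^*$ and $\sum_{x\neq 0}\omega^{(1-c)x}=-1$ rather than collapsing to the constant $p-1$. This yields a locus contribution $(p-1)\cdot(-1)=1-p$ and an off-locus contribution $-(1-(-1))=-2$, summing to $-p-1$.

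The computations are routine, and the closest thing to an obstacle is purely organizational: one must correctly identify the \emph{exceptional locus} on which the inner $u$-sum fails to vanish ($y=-x$ for (ii), $y=-cx$ for (iii)), evaluate $\sum \omega^{x+y}$ restricted to it, and observe that the hypothesis $c\neq 1$ is exactly what makes $1-c$ invertible in (iii) so that the locus contribution differs in form from the (ii) case. No sharper estimate is needed anywhere.
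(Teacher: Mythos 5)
Your proof is correct, and it is worth noting that the paper itself offers no proof of this proposition at all: the three identities are simply quoted from the Lehmers' 1967 paper (Eqs.\ 1.9, 3.7 and 3.6 there), so any comparison is between your argument and a citation. What you supply is a genuinely self-contained and elementary verification: since the ground field is $\MF_p$, the trace is the identity, ${\cal K}(u)=\sum_{x\neq 0}\omega^{x+ux^{-1}}$, and everything reduces to the orthogonality relation $\sum_{u=1}^{p-1}\omega^{au}=p-1$ or $-1$ according as $a\equiv 0$ or not. I checked each case: in (i) the inner sum is always $-1$ and you get $-(-1)=1$; in (ii) the exceptional locus $y=-x$ carries $\omega^{x+y}=1$ and contributes $(p-1)^2$, while the complement contributes $-(1-(p-1))=p-2$, totalling $p^2-p-1$; in (iii) the locus $y=-cx$ carries $\omega^{(1-c)x}$, which (precisely because $c\neq 1$) sums to $-1$ rather than $p-1$, giving $(1-p)+(-2)=-p-1$. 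All three computations are right, and your argument even covers $p=2$ in (i) and (ii), where (iii) is vacuous. The only thing your approach ``costs'' is length relative to a citation; what it buys is that the paper's key uniqueness result (Proposition 6, via Lemma 2) would then rest on a fully verified input rather than an external reference.
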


\medskip

\noindent
An another notion we need, is that of left-circulant matrix and for the sake of completeness we recall it as well as some basic properties of this kind of matrices (see, e.g. Carmona \etal \cite{Carmona}).
\begin{definition}
An $n \times n$ matrix ${\bf A}$ is called a {\em left-circulant matrix} if the $i-$th row of ${\bf A}$
is obtained from the first row of ${\bf A}$ by a left cyclic shift of $i - 1$ steps, i.e. the general
form of the left-circulant matrix is
\[
{\bf A} =
\left[ \begin{array}{cccccccc}
\;\;a_{0}\;\;a_{1}\;\;a_{2}\;\;...\;\;a_{n-2}\;\;a_{n-1}\\
a_{1}\;\;a_{2}\;\;a_{3}\;\;...\;\;a_{n-1}\;\;a_{0}\\
a_{2}\;\;a_{3}\;\;a_{4}\;\;...\;\;\;a_{0}\;\;\;\;\;a_{1}\\
.\;\;\;\; .\;\;\;\; .\;\;\; .\;\;\;\; .\;\;\; .\;\;\; .\;\;\; .\\
\;\;\;a_{n-1}a_{0}\;a_{1}\;\;...\;\;a_{n-3}\;\;a_{n-2}\\
\end{array} \right].
\]
\end{definition}

\medskip

\noindent
Apparently, the left-circulant matrices are symmetric and it is well-known that the inverse of a real invertible matrix of this type is again left-circulant.

\medskip

\noindent
The determinant of a left-circulant matrix can be computed using the following proposition.
\begin{proposition}\label{detcirc} \ %
Let ${\bf A}$ be a left-circulant matrix with first row $(a_0,a_1,\ldots,a_{n-1})$. Then:
\[
\det{\bf A} = (-1)^{(n-1)(n-2) \over 2}\prod_{l = 0}^{n-1} {f(\theta_{l})},
\]
where $f(x) = \sum_{r = 0}^{n-1} {a_{r} x^{r}}$ and $\theta_{l},\;l = 0,1,\ldots,n-1$ are the $n^\text{th}$ roots of unity.
\end{proposition}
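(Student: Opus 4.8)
The plan is to reduce the left-circulant matrix $\mathbf{A}$ to an ordinary (right-)circulant matrix, for which the eigenvalues---and hence the determinant---are classical. Indexing rows and columns from $0$ to $n-1$, the defining left-shift structure gives $\mathbf{A}_{ij} = a_{(i+j) \bmod n}$; in particular $\mathbf{A}$ is symmetric, as already noted. By contrast, the ordinary circulant $\mathbf{C}$ built from the same first row has entries $\mathbf{C}_{ij} = a_{(j-i)\bmod n}$, and it is well known that $\mathbf{C}$ is diagonalized by the Fourier matrix of the $\theta_l$: the vector $(1,\theta_l,\theta_l^2,\ldots,\theta_l^{n-1})$ is an eigenvector with eigenvalue $f(\theta_l)$, so that $\det \mathbf{C} = \prod_{l=0}^{n-1} f(\theta_l)$.

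First I would exhibit the precise relationship between the two matrices. Let $P$ be the permutation matrix of the map $\tau : i \mapsto (-i) \bmod n$ on $\{0,1,\ldots,n-1\}$, with the convention $P_{ik}=1$ iff $k=\tau(i)$. A direct entrywise check then yields $(P\mathbf{C})_{ij} = \mathbf{C}_{\tau(i),j} = a_{(j+i)\bmod n} = \mathbf{A}_{ij}$, so that $\mathbf{A} = P\mathbf{C}$. Taking determinants gives $\det \mathbf{A} = \det(P)\,\det(\mathbf{C}) = \operatorname{sgn}(\tau)\prod_{l=0}^{n-1} f(\theta_l)$, and it remains only to pin down the sign $\operatorname{sgn}(\tau)$.

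The sign is where the stated exponent $(n-1)(n-2)/2$ appears, and I expect this bookkeeping to be the only delicate point. The involution $\tau$ fixes $0$ (and also $n/2$ when $n$ is even) and interchanges $i \leftrightarrow n-i$ for the remaining indices, so it is a product of exactly $\lfloor (n-1)/2\rfloor$ transpositions and $\operatorname{sgn}(\tau) = (-1)^{\lfloor (n-1)/2\rfloor}$. Finally I would verify the elementary congruence $\lfloor (n-1)/2\rfloor \equiv (n-1)(n-2)/2 \pmod 2$ by splitting into the four residue classes of $n \bmod 4$; this confirms $\operatorname{sgn}(\tau) = (-1)^{(n-1)(n-2)/2}$ and completes the proof. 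The underlying circulant eigenvalue formula may either be invoked as classical or established en route by the Vandermonde diagonalization sketched above, whichever keeps the argument self-contained.
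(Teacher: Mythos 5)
Your proof is correct and follows essentially the same route as the paper, which justifies the proposition by the remark that a left-circulant matrix is obtained from the ordinary circulant with the same first row by permuting rows (reversing rows $2$ through $n$) and then invoking the classical circulant determinant formula $\prod_{l}f(\theta_l)$. Your factorization $\mathbf{A}=P\mathbf{C}$ with $\tau: i\mapsto -i \bmod n$ and the verification that $\operatorname{sgn}(\tau)=(-1)^{\lfloor (n-1)/2\rfloor}=(-1)^{(n-1)(n-2)/2}$ is a correct, fully detailed version of the sign bookkeeping the paper only asserts.
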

In fact, the above formula differs from the "classical" formula for determinant of circulant matrix up to a sign.
The reason is that any left-circulant matrix can be obtained from the truly circulant matrix having the same first row by ${(n-1)(n-2) \over 2}$ transpositions of rows (from second to last one).

\medskip

\noindent
A less-known property of the left-circulant matrices is given by the next.
\begin{proposition}\label{sum'}
Let ${\bf A}$ be a left-circulant invertible matrix whose sum of the first row elements equals to  $S$.
Then for the sum $S^{\prime}$ of the first row elements of inverse matrix ${\bf A}^{-1}$ it holds $S^{\prime} = 1/S$.
\end{proposition}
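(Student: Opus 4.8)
The plan is to exploit the fact that the all-ones vector is an eigenvector of every left-circulant matrix. First I would observe that, since each row of ${\bf A}$ is a cyclic shift of its first row, every row consists of the same entries $a_{0}, a_{1}, \ldots, a_{n-1}$ (merely permuted) and hence has the same sum $S$. Writing ${\bf 1} = (1,1,\ldots,1)^{T}$ for the all-ones column vector, this is precisely the statement that ${\bf A}\,{\bf 1} = S\,{\bf 1}$, i.e.\ ${\bf 1}$ is an eigenvector of ${\bf A}$ associated with the eigenvalue $S$.

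Next I would note that $S \neq 0$: since ${\bf A}$ is invertible it cannot annihilate the nonzero vector ${\bf 1}$, so $S\,{\bf 1} = {\bf A}\,{\bf 1} \neq {\bf 0}$ forces $S \neq 0$. (Equivalently, by Proposition \ref{detcirc} the value $f(1) = S$ is one of the factors $f(\theta_{l})$ of the determinant, which is nonzero by invertibility.) Consequently I may multiply the identity ${\bf A}\,{\bf 1} = S\,{\bf 1}$ on the left by ${\bf A}^{-1}$ and divide by $S$, obtaining ${\bf A}^{-1}\,{\bf 1} = (1/S)\,{\bf 1}$; thus ${\bf 1}$ is also an eigenvector of ${\bf A}^{-1}$, now with eigenvalue $1/S$.

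Finally I would invoke the property, already recorded in the excerpt, that the inverse of an invertible left-circulant matrix is again left-circulant. Applying the observation of the first paragraph to ${\bf A}^{-1}$ shows that all of its rows share one common sum, namely the sum $S^{\prime}$ of its first row, so that ${\bf A}^{-1}\,{\bf 1} = S^{\prime}\,{\bf 1}$. Comparing the two expressions obtained for ${\bf A}^{-1}\,{\bf 1}$ and reading off any coordinate yields $S^{\prime} = 1/S$, as claimed.

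The argument is essentially immediate once the eigenvector ${\bf 1}$ is spotted, so there is no serious computational obstacle; the only two points that require a word of justification are the non-vanishing of $S$ (which I would deduce from invertibility as above) and the structural fact that ${\bf A}^{-1}$ remains left-circulant (which I would simply quote from the stated properties of this class of matrices). Both are cheap, so the entire proof should fit in a few lines.
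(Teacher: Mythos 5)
Your proof is correct, but it takes a genuinely different route from the paper's. The paper works directly with the entries: writing out the $n$ scalar identities coming from the first column of ${\bf A}{\bf A}^{-1}={\bf I}_n$ (namely $\sum_l a_{s+l}a'_l = \delta_{s,0}$, indices mod $n$, using the symmetry of left-circulant matrices to identify the first column of ${\bf A}^{-1}$ with its first row), summing all of them, and factoring the double sum as $\bigl(\sum_l a_l\bigr)\bigl(\sum_l a'_l\bigr)=1$. You instead observe that ${\bf 1}$ is an eigenvector of ${\bf A}$ with eigenvalue $S$ and hence of ${\bf A}^{-1}$ with eigenvalue $1/S$. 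Your argument is cleaner and slightly more general --- it applies verbatim to any invertible matrix with constant row sums, and it makes the non-vanishing of $S$ transparent (a point the paper's computation leaves implicit). One small remark: your appeal to the fact that ${\bf A}^{-1}$ is again left-circulant is actually superfluous, since ${\bf A}^{-1}{\bf 1}=(1/S){\bf 1}$ already says that \emph{every} row of ${\bf A}^{-1}$, in particular the first, sums to $1/S$; the paper's proof, by contrast, does quietly rely on the structural fact that left-circulant matrices are symmetric. Both proofs are a few lines and entirely elementary, so the difference is one of style rather than substance.
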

\begin{proof}
Let the first rows of the matrices ${\bf A}$ and ${\bf A}^{-1}$  be $(a_0,a_1,\ldots,a_{n-1})$ and $(a^{\prime}_0,a^{\prime}_1,\ldots,a^{\prime}_{n-1})$, respectively. Using the defining matrix equality, i.e. ${\bf A} {\bf A}^{-1} = {\bf I}_{n},$ where ${\bf I}_{n}$ is the identity matrix of size $n$, we obtain the following (corresponding to the first column of the identity matrix) equalities:
\[
\sum_{l = 0}^{n-1}a_{l} a^{\prime}_{l} = 1
\]
\[
\sum_{l = 0}^{n-1}a_{s+l} a^{\prime}_{l} = 0,\; s = 1,\ldots,n-1,
\]
where the subscript $s+l$ is taken modulo $n$.
Summing up these $n$ equalities, after some rewriting, one gets: $\sum_{l=0}^{n-1}a_{l} \times \sum_{l=0}^{n-1}a^{\prime}_{l} = 1$ which completes the proof.
\qed
\end{proof}
\medskip

\noindent
We shall need as well the next.
\begin{lemma}\label{xy}
Let ${\bf A}_{n}$ be an $n \times n$ matrix having entries equal to $x$ over its main diagonal and equal to $y$ outside of the main diagonal. Then it holds:
\[
\Delta_{n} \compare \det {\bf A}_{n} = \big(x+(n-1)y\big)\big(x - y\big)^{n-1}.
\]
\end{lemma}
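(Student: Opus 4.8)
The plan is to recognize that $\mathbf{A}_{n}$ has the rank-one-plus-scalar structure $\mathbf{A}_{n} = (x-y)\mathbf{I}_{n} + y\mathbf{J}_{n}$, where $\mathbf{J}_{n}$ denotes the $n\times n$ all-ones matrix. This decomposition makes the determinant accessible either through elementary row operations or through the spectrum of $\mathbf{J}_{n}$, and I would present the row-operation argument as the primary route because it is fully self-contained and requires no appeal to diagonalizability.

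First I would add all $n$ rows of $\mathbf{A}_{n}$ to its first row; since every column of $\mathbf{A}_{n}$ contains exactly one entry equal to $x$ and $n-1$ entries equal to $y$, each entry of the new first row equals $x+(n-1)y$. Factoring this common scalar out of the first row contributes a factor $x+(n-1)y$ to the determinant and leaves a first row consisting entirely of ones. Next, for each $i=2,\ldots,n$ I would subtract $y$ times this new first row from the $i$-th row: its off-diagonal entries (all equal to $y$) become $0$, while its diagonal entry becomes $x-y$. The resulting matrix is upper triangular with diagonal $(1,x-y,\ldots,x-y)$, so its determinant is $(x-y)^{n-1}$, and multiplying back the extracted factor yields the claimed value $\big(x+(n-1)y\big)(x-y)^{n-1}$.

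As a cross-check, and a cleaner conceptual alternative, I would note that the eigenvalues of $\mathbf{J}_{n}$ are $n$, with eigenvector $(1,\ldots,1)^{\top}$, and $0$ with multiplicity $n-1$; hence the eigenvalues of $\mathbf{A}_{n}=(x-y)\mathbf{I}_{n}+y\mathbf{J}_{n}$ are $x+(n-1)y$ once and $x-y$ with multiplicity $n-1$, and $\Delta_{n}$ is precisely their product. There is essentially no genuine obstacle here: the only points demanding a little care are bookkeeping the extracted scalar and the triangular structure through the row reduction, together with the degenerate cases $n=1$ (where the formula correctly reads $\Delta_{1}=x$) and $x=y$ with $n\ge 2$ (where both sides vanish, consistent with $\mathbf{A}_{n}$ having rank one), both of which the argument accommodates automatically.
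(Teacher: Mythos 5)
Your primary argument is correct and is essentially identical to the paper's own proof: the paper performs the same two steps (replace the first row by the sum of all rows, factor out $x+(n-1)y$, then subtract $y$ times the resulting all-ones row from the remaining rows to reach a triangular matrix with diagonal $(1,x-y,\ldots,x-y)$), merely phrased as left-multiplication by unit-determinant elementary matrices. The eigenvalue cross-check via $\mathbf{A}_n=(x-y)\mathbf{I}_n+y\mathbf{J}_n$ is a valid bonus but does not change the verdict.
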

\begin{proof}
The following chain of computations is straightforward using basic properties of determinants (here the employed notation for determinant is $|{\bf .}|$ ):
\begin{multline*}
\Delta_{n}=
\begin{vmatrix}
x&y&y&\dots&y\\
y&x&y&\dots&y\\
y&y&x&\dots&y\\
\multispan{5}\strut\dotfill\\
y&y&y&\dots&x\\
\end{vmatrix}
=
\begin{vmatrix}
1&1&1&\dots&1\\
0&1&0&\dots&0\\
0&0&1&\dots&0\\
\multispan{5}\strut\dotfill\\
0&0&0&\dots&1\\
\end{vmatrix}
\begin{vmatrix}
x&y&y&\dots&y\\
y&x&y&\dots&y\\
y&y&x&\dots&y\\
\multispan{5}\strut\dotfill\\
y&y&y&\dots&x\\
\end{vmatrix}
=
\big(x+(n-1)y\big)
\begin{vmatrix}
1&1&1&\dots&1\\
y&x&y&\dots&y\\
y&y&x&\dots&y\\
\multispan{5}\strut\dotfill\\
y&y&y&\dots&x\\
\end{vmatrix}
=\\
\big(x+(n-1)y\big)
\begin{vmatrix}
1&0&0&\dots&0\\
-y&1&0&\dots&0\\
-y&0&1&\dots&0\\
\multispan{5}\strut\dotfill\\
-y&0&0&\dots&1\\
\end{vmatrix}
\begin{vmatrix}
1&1&1&\dots&1\\
y&x&y&\dots&y\\
y&y&x&\dots&y\\
\multispan{5}\strut\dotfill\\
y&y&y&\dots&x\\
\end{vmatrix}
=
\big(x+(n-1)y\big)
\begin{vmatrix}
1&1&1&\dots&1\\
0&x-y&0&\dots&0\\
0&0&x-y&\dots&0\\
\multispan{5}\strut\dotfill\\
0&0&0&\dots&x-y\\
\end{vmatrix}
=\\
\big(x+(n-1)y\big)\big(x - y\big)^{n-1}
\end{multline*}
\qed
\end{proof}
\medskip

\noindent
For the sake of completeness, we finally recall the famous Andr\'e Weil bound on Kloosterman sums over finite fields.
Namely, for any finite field $\MF_{q}$ of characteristic $p$ and every $u \in \MF^{*}_{q}$, the corresponding Kloosterman sum (see, Definition \ref{Klsum})
satisfies the Weil bound $|{\cal K}^{(m)} (u)| \leq 2\sqrt{p^{m}}$.

\section{\bf An outline of our approach}
\label{sect2}
The approach to achieving the goal of this paper consists of the following three basic steps:
\begin{itemize}
\item reducing the number of unknowns;
\item working out a system of linear equations;
\item the uniqueness of solution.
\end{itemize}
Hereinafter, we consecutively exhibit them.

\subsection{reducing the number of unknowns}
First, we shall prove the following proposition.
\begin{proposition}\label{prmain} \ %
For arbitrary $i,j \in \MF_{p}$, it holds:
\[
(\emph{i})\;\; T_{ij} = T_{ji},
\]
\noindent
and for $i \in \MF^{*}_{p}$:
\[
(\emph{ii})\;\; T_{ij} = T_{1 s},
\]
where $s = ij$.
\end{proposition}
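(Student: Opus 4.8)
The plan is to establish both identities by exhibiting explicit bijections on the multiplicative group $\MF_q^*$ that permute the relevant defining sets while tracking how they transform the pair $\big(tr(x), tr(x^{-1})\big)$. Since each $T_{ij}$ is by definition the cardinality of such a set, producing a bijection between two of these sets immediately forces equality of their cardinalities. So the whole argument reduces to choosing the right group map in each case and checking its effect on trace and co-trace.

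For part (i), I would use the inversion map $x \mapsto x^{-1}$, which is an involution on $\MF_q^*$. If $x$ satisfies $tr(x) = i$ and $tr(x^{-1}) = j$, then its image $y = x^{-1}$ satisfies $tr(y) = tr(x^{-1}) = j$ and $tr(y^{-1}) = tr(x) = i$. Hence inversion carries the set counted by $T_{ij}$ bijectively onto the set counted by $T_{ji}$, whence $T_{ij} = T_{ji}$.

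For part (ii), with $i \in \MF^*_p$, I would use the scaling map $x \mapsto i^{-1} x$. The crucial ingredient here is that the trace is $\MF_p$-linear, i.e. $tr(cx) = c\, tr(x)$ for every $c \in \MF_p$; this follows from the Frobenius relation $c^{p^k} = c$ valid for $c \in \MF_p$. Setting $y = i^{-1} x$ then gives $tr(y) = i^{-1} tr(x) = i^{-1} \cdot i = 1$, while $y^{-1} = i\, x^{-1}$ yields $tr(y^{-1}) = i\, tr(x^{-1}) = ij = s$. Since $x \mapsto i^{-1} x$ is a bijection of $\MF_q^*$, it maps the set counted by $T_{ij}$ onto the set counted by $T_{1s}$, which establishes $T_{ij} = T_{1s}$.

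I do not anticipate a genuine obstacle: once the $\MF_p$-linearity of the trace is in hand, the argument is purely combinatorial. The only points requiring a moment's care are confirming that multiplication by the nonzero scalar $i^{-1}$ commutes with inversion as expected, namely $(i^{-1}x)^{-1} = i\, x^{-1}$, and that it genuinely permutes $\MF_q^*$; both are immediate. It is worth remarking that part (ii) presupposes $i \neq 0$ precisely because the normalizing factor $i^{-1}$ must exist, so no analogous reduction is available in the case $i = 0$.
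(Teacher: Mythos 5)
Your proposal is correct and follows essentially the same route as the paper: part (i) via the involution $x \mapsto x^{-1}$ and part (ii) via the scaling bijection $x \mapsto x/i$ combined with the $\MF_p$-linearity of the trace. You merely spell out the bookkeeping that the paper leaves implicit.
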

\begin{proof}
\noindent
The obvious $(x^{-1})^{-1}=x$ for any $x \not = 0$ implies (\emph{i}).
Claim (\emph{ii}) follows by the fact that the mapping $x \to x/i$ is an $1-1$ correspondence on
$\MF_{q}$, and the next easily verifiable relations:
\[
tr(x/i) = tr(x)/i; \;\; tr((x/i)^{-1}) = tr(i\;x^{-1}) = i\;tr(x^{-1}),
\]
valid for any $i \in \MF^{*}_{p}$
\qed
\end{proof}
As an immediate corollary it is obtained.
\begin{corollary}\label{cor1.}
For any $i \in \MF^{*}_{p}$, it holds: $T_{0i} = T_{i0} = T_{10} = T_{01}$.
\end{corollary}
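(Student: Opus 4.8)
The plan is to read off the entire chain $T_{0i} = T_{i0} = T_{10} = T_{01}$ directly from the two assertions of Proposition~\ref{prmain}, linking the four quantities in consecutive pairs for an arbitrary fixed $i \in \MF^{*}_{p}$. Since each link is a single application of one of the two parts, no genuine computation is required; the whole argument is purely formal.

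First I would dispose of the two outer equalities by the symmetry relation (i), namely $T_{ij}=T_{ji}$. Applying it to the pair $(0,i)$ yields $T_{0i}=T_{i0}$, and applying it to the pair $(1,0)$ yields $T_{10}=T_{01}$. This settles the first and last equalities in the claimed chain.

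The central equality $T_{i0}=T_{10}$ is where part (ii) enters. I would invoke $T_{ij}=T_{1s}$ with $s=ij$, taking the first index to be $i\in\MF^{*}_{p}$ and the second index to be $j=0$. Then $s=i\cdot 0=0$, so (ii) gives $T_{i0}=T_{10}$ at once. Concatenating the three established equalities $T_{0i}=T_{i0}$, $T_{i0}=T_{10}$ and $T_{10}=T_{01}$ yields the corollary.

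The only point demanding any care --- and it is the closest thing to an obstacle here --- is checking that part (ii) is legitimately applicable: its hypothesis requires only that the \emph{first} index lie in $\MF^{*}_{p}$, imposing no restriction on the second index, so the choice $j=0$ is admissible and produces the valid index $s=0\in\MF_{p}$. Once this is observed, the result is immediate.
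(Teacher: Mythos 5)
Your proposal is correct and matches the paper's intent exactly: the paper simply labels the statement an ``immediate corollary'' of Proposition~\ref{prmain}, and your chain --- $(i)$ for the two outer equalities, $(ii)$ with $j=0$ (hence $s=i\cdot 0=0$) for the middle one --- is precisely the argument being left implicit. Your remark that part $(ii)$ places no restriction on the second index is the right point to check, and it holds.
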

Proposition \ref{prmain} and Corollary \ref{cor1.} imply that it is sufficient to find closed-form formulae for $T_{00}, T_{01}$ and $T_{1s}, s = 1, \ldots, p-1$.

\medskip

\noindent
Moreover, one easily deduces the following:
\begin{lemma}\label{lemma1}
\begin{eqnarray}\label{eq.1}
T_{10} = q/p - \sum_{s = 1}^{p-1} T_{1s}
\end{eqnarray}
\begin{eqnarray}\label{eq.2}
T_{00} =(p - 1)\sum_{s = 1}^{p-1} T_{1s} + 2q/p - q - 1
\end{eqnarray}
\end{lemma}
\begin{proof}
Indeed, Eq. (\ref{eq.1}) is an immediate consequence of the fact that  the number of elements in $\MF_{q}$ with fixed trace equals $q/p$,
while Eq. (\ref{eq.2}) follows by the same fact (excluding the zero of the field) and taking into account Corollary \ref{cor1.} as well as just proven Eq. (\ref{eq.1}).
\qed
\end{proof}
Lemma \ref{lemma1} means that $T_{00}$ and $T_{01} = T_{10}$ can be expressed in terms of the quantities $T_{1s}, s = 1, \ldots, p-1$ and the characteristic $p$.

\subsection{working out a system of linear equations for $T_{1s}$}
Our aim in this subsection is to find a system of linear equations for the $p - 1$ unknowns $t_{s} \compare T_{1s}$.

\medskip

\noindent
To this end, for each $u \in \MF^{*}_{p}$ we proceed as follows:
\[
{\cal K}^{(m)}(u) \compare \sum_{x \in \MF_{q}^{*}} \omega^{tr(x + u x ^{-1})} = \sum_{i,j = 0}^{p-1} T_{ij} \omega^{i+uj} =
\]
\[
T_{00} + \sum_{j = 1}^{p-1} T_{0j}\omega^{uj} + \sum_{i = 1}^{p-1} T_{i0}\omega^{i} + \sum_{i,j = 1}^{p-1} t_{ij} \omega^{i+uj} =
\]
\[
T_{00} - 2 T_{01} + \sum_{s = 1}^{p-1} t_{s}(\sum_{i = 1}^{p-1}\omega^{i + {us \over i}}) = T_{00} - 2 T_{01} + \sum_{s = 1}^{p-1}{\cal K}(us)t_{s}.
\]
Notice that in the above chain of elementary transformations we have consecutively used Proposition \ref{prmain}.(\emph{ii}), Corollary \ref{cor1.} \; and the fact that $\omega = e^{\frac{2 \pi i}p}$ is a nontrivial $p-$th root of unity.

\medskip

\noindent
Further on, making use of Lemma \ref{lemma1} in the latter equality and after a minor rewriting, we get the following linear system:
\begin{eqnarray}\label{eq.3}
\sum_{s = 1}^{p-1}\big({\cal K}(us)+p+1\big)t_{s} = {\cal K}^{(m)}(u) + q + 1,\; u \in \MF_{p}^{*}.
\end{eqnarray}
Note that, for fixed $u$, the right-hand side of (\ref{eq.3}) can be expressed in terms of ${\cal K}(u), m$ and $p$ taking into consideration Carlitz' result (Proposition \ref{Carlitz}).

\subsection{the uniqueness of solution}

\noindent
\begin{proposition}\label{propequiv}
The linear system (\ref{eq.3}) is equivalent to a system whose matrix of coefficients is left-circulant.
\end{proposition}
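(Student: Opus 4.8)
The plan is to exhibit an explicit relabelling of the indices $u,s \in \MF_p^*$ under which the coefficient matrix of (\ref{eq.3}) takes exactly the left-circulant form of the definition above, and then to verify that this relabelling transforms (\ref{eq.3}) into an equivalent system. First I would record the shape of the matrix: writing $M_{u,s} = {\cal K}(us) + p + 1$ for $u,s \in \MF_p^* = \{1,\ldots,p-1\}$, the system (\ref{eq.3}) reads $\sum_{s} M_{u,s}\, t_s = {\cal K}^{(m)}(u) + q + 1$. The crucial structural feature is that $M_{u,s}$ depends on $u$ and $s$ only through their product $us \in \MF_p^*$, since the additive constant $p+1$ is the same in every position.

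The decisive observation is that $\MF_p^*$ is cyclic of order $n = p-1$. I would fix a primitive root $g$ modulo $p$ and index both rows and columns by exponents, setting $u = g^a$ and $s = g^b$ with $a,b \in \{0,1,\ldots,n-1\}$. Because $us = g^{a+b}$ and the exponent is only relevant modulo $n$, the reindexed entry becomes a function of $(a+b)\bmod n$ alone, namely $a_{(a+b)\bmod n}$ with $a_k \compare {\cal K}(g^k) + p + 1$. A matrix whose $(a,b)$ entry equals $a_{(a+b)\bmod n}$ is, by the definition of left-circulant matrix recalled above, precisely left-circulant with first row $(a_0,a_1,\ldots,a_{n-1})$; thus the reindexed coefficient matrix is left-circulant. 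The algebraic heart of the argument is exactly this: the discrete logarithm turns the multiplicative index $us$ into the additive index $a+b$, which is what the left-circulant pattern requires.

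It then remains to check that the reindexing yields an \emph{equivalent} linear system. The passage $u \mapsto g^a$, $s \mapsto g^b$ is one and the same bijection $\sigma$ of the common index set $\MF_p^*$ applied simultaneously to the rows and the columns. Permuting the rows by $\sigma$ merely reorders the equations and so leaves the solution set untouched, whereas permuting the columns amounts to relabelling the unknowns as $\tilde t_b = t_{g^b}$ and the right-hand sides as $\tilde r_a = {\cal K}^{(m)}(g^a) + q + 1$. Consequently the transformed system $\sum_{b} a_{(a+b)\bmod n}\, \tilde t_b = \tilde r_a$ has a left-circulant coefficient matrix and its solution set corresponds bijectively to that of (\ref{eq.3}), which is the asserted equivalence.

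I expect the only genuinely delicate point to be the bookkeeping of the two roles played by the single permutation $\sigma$: acting on rows it reorders equations, while acting on columns it renames variables, and one must make sure no spurious transposition or inverse of $\sigma$ is introduced so that the product index really lands on $a+b$ rather than, say, $a-b$. Once the primitive root is fixed and this distinction is handled cleanly, the left-circulant structure and the equivalence both follow immediately.
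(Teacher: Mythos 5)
Your proof is correct and follows essentially the same route as the paper: both fix a primitive root $g$ of $\MF_p^*$, reindex rows and columns by discrete logarithms so that the entry ${\cal K}(us)+p+1$ becomes a function of $(a+b) \bmod (p-1)$, and observe that this is exactly the left-circulant pattern. Your extra care in checking that the simultaneous row/column relabelling preserves the solution set is a welcome addition but does not change the argument.
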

\begin{proof}
Let $g$ be a primitive element of $\MF^{*}_{p}$. Renaming the unknowns by $x_{l} \compare t_{g^{l}}, l = 0, \ldots, p-2$ and arranging equations (\ref{eq.3}) according to increasing order of powers of $g$ in their right-hand sides, one gets a system of the form:
\begin{eqnarray}\label{eq.4}
\sum_{l = 0}^{p-2}k_{s+l} x_{l} = {\cal K}^{(m)}(g^s) + q + 1,\;\; s = 0, \ldots, p-2,
\end{eqnarray}
where the subscript of $k_{s+l} \compare {\cal K}(g^{s+l}) + p + 1$ is taken modulo $p-1$, of course.
Obviously, the matrix ${\bf K} \compare {\bf K}(g)$ of coefficients of system (\ref{eq.4}) is a real left-circulant matrix.
\qed
\end{proof}

\medskip

\noindent
Now, we prove the following.
\begin{lemma}\label{lem2}
\[
\det {\bf K} = p^{2} \det {\bf K}^{\prime},
\]
where ${\bf K}^{\prime}$ is the left-circulant matrix having as a first row the vector $\big({\cal K}(1),{\cal K}(g),{\cal K}(g^{2}),\ldots,{\cal K}(g^{p-2})\big)$.
\end{lemma}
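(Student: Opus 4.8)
The plan is to read the determinant of each left-circulant matrix off Proposition \ref{detcirc} and to observe that passing from ${\bf K}'$ to ${\bf K}$ merely shifts every entry by the constant $p+1$. Concretely, writing $n = p-1$, the first row of ${\bf K}$ consists of the entries $k_l \compare {\cal K}(g^l) + p + 1$, so its generating polynomial is $f_{\bf K}(x) = \sum_{l=0}^{p-2} k_l x^l = f_{{\bf K}'}(x) + (p+1)\sum_{l=0}^{p-2} x^l$, where $f_{{\bf K}'}(x) = \sum_{l=0}^{p-2} {\cal K}(g^l)\,x^l$ is the generating polynomial of ${\bf K}'$.

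Next I would evaluate both polynomials at the $(p-1)$-th roots of unity $\theta_0,\ldots,\theta_{p-2}$. The elementary identity $\sum_{r=0}^{p-2}\theta^r = 0$ for every root of unity $\theta \neq 1$, together with $\sum_{r=0}^{p-2} 1 = p-1$, shows that the correction term $(p+1)\sum_l x^l$ vanishes at all $\theta_l \neq 1$ and contributes $(p+1)(p-1)$ only at $\theta_l = 1$. Hence $f_{\bf K}(\theta_l) = f_{{\bf K}'}(\theta_l)$ whenever $\theta_l \neq 1$, while $f_{\bf K}(1) = f_{{\bf K}'}(1) + (p+1)(p-1)$.

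The single nontrivial input is the value $f_{{\bf K}'}(1) = \sum_{l=0}^{p-2}{\cal K}(g^l) = \sum_{u=1}^{p-1}{\cal K}(u)$, the middle equality holding because $g$ is primitive so $g^0,\ldots,g^{p-2}$ exhaust $\MF_p^*$. By Proposition \ref{Lehmer}.(i) this sum equals $1$, whence $f_{{\bf K}'}(1) = 1$ and $f_{\bf K}(1) = 1 + (p+1)(p-1) = p^2$.

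Finally I would form the ratio of the two product formulas supplied by Proposition \ref{detcirc}. The sign prefactor $(-1)^{(p-2)(p-3)/2}$ is the same for both matrices (they have the same size) and cancels, as do all factors indexed by $\theta_l \neq 1$, since those coincide for ${\bf K}$ and ${\bf K}'$; only the $\theta_l = 1$ factor survives, giving $\det {\bf K} = (f_{\bf K}(1)/f_{{\bf K}'}(1))\,\det {\bf K}' = p^2 \det {\bf K}'$. The only point that needs a word of care is the degenerate case in which $f_{{\bf K}'}(\theta_l) = 0$ for some $\theta_l \neq 1$: there one cannot literally divide, but since ${\bf K}$ and ${\bf K}'$ share every such vanishing factor, both determinants equal $0$ and the claimed identity holds trivially. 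I therefore expect no real obstacle beyond the bookkeeping of which root-of-unity factors survive the cancellation.
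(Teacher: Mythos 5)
Your proposal is correct and follows essentially the same route as the paper: both apply Proposition \ref{detcirc}, observe that the constant shift $p+1$ vanishes at every nontrivial $(p-1)^{\text{st}}$ root of unity, and use Proposition \ref{Lehmer}.(i) to evaluate the $\theta=1$ factor as $p^{2}$ versus $1$. The only cosmetic difference is that the paper writes the $\theta=1$ factor of ${\bf K}$ directly as $p^{2}$ times that of ${\bf K}^{\prime}$ and multiplies the factorizations, which sidesteps the division-by-zero caveat you handle separately.
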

\begin{proof}

\medskip

\noindent
We make use of Proposition \ref{detcirc}. There are two essentially distinct cases to be considered:

1)\;if $\theta = 1$, we easily derive:
\[
\sum_{l=0}^{p-2}k_{l}\theta^{l} = \sum_{l=0}^{p-2}\big({\cal K}(g^{l}) + p + 1\big) = \sum_{l=0}^{p-2}{\cal K}(g^{l}) + p^{2} - 1 =
\]
\[
p^{2} = p^{2} \times 1 = p^{2} \sum_{l=0}^{p-2}{\cal K}(g^{l})\theta^{l}
\]
by using Proposition \ref{Lehmer}({\rm i}),

2)\; otherwise, we have:
\[
\sum_{l=0}^{p-2}k_{l} \theta^{l} = \sum_{l=0}^{p-2}\big({\cal K}(g^{l})\theta^{l} + (p + 1)\theta^{l}\big) = \sum_{l=0}^{p-2}{\cal K}(g^{l})\theta^{l},
\]
since $\theta$ is a nontrivial $(p-1)^\text{st}$ root of unity.
\qed
\end{proof}

\medskip

\noindent
\begin{proposition}\label{prop3}
\[
|\det {\bf K}^{\prime}| = p^{p-2}
\]
\end{proposition}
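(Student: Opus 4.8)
The plan is to avoid evaluating the product $\prod_l f(\theta_l)$ from Proposition \ref{detcirc} directly (which would invite Gauss-sum machinery) and instead to pass to the symmetric Gram matrix $\mathbf{K}^{\prime}(\mathbf{K}^{\prime})^{T}$, whose determinant is $(\det\mathbf{K}^{\prime})^{2}$ and whose entries are exactly the autocorrelations of the Kloosterman sequence $\big({\cal K}(1),{\cal K}(g),\ldots,{\cal K}(g^{p-2})\big)$. These autocorrelations are precisely the quantities controlled by Proposition \ref{Lehmer}, and since we only seek $|\det\mathbf{K}^{\prime}|$, squaring loses nothing.

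First I would record that, being left-circulant, $\mathbf{K}^{\prime}$ is symmetric, so $\mathbf{K}^{\prime}(\mathbf{K}^{\prime})^{T} = (\mathbf{K}^{\prime})^{2}$ and $\det\big(\mathbf{K}^{\prime}(\mathbf{K}^{\prime})^{T}\big) = (\det\mathbf{K}^{\prime})^{2}$. Writing $a_{r} \compare {\cal K}(g^{r})$, the $(i,j)$ entry of $\mathbf{K}^{\prime}(\mathbf{K}^{\prime})^{T}$ equals $\sum_{r} a_{i+r}\,a_{j+r}$, with all subscripts read modulo $p-1$.

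Next I would evaluate these entries. On the diagonal ($i=j$) the entry is $\sum_{r=0}^{p-2}{\cal K}(g^{r})^{2} = \sum_{u=1}^{p-1}{\cal K}^{2}(u) = p^{2}-p-1$ by Proposition \ref{Lehmer}(ii). For $i\neq j$, setting $c = g^{\,j-i}\neq 1$ and reindexing, the entry becomes $\sum_{r} {\cal K}(g^{r}){\cal K}(c\,g^{r}) = \sum_{u=1}^{p-1}{\cal K}(u){\cal K}(cu) = -p-1$ by Proposition \ref{Lehmer}(iii); crucially this value is the same for every off-diagonal position, independent of the pair $(i,j)$. Hence $\mathbf{K}^{\prime}(\mathbf{K}^{\prime})^{T}$ is exactly the matrix treated in Lemma \ref{xy} with $x = p^{2}-p-1$, $y = -p-1$ and $n = p-1$. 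Applying that lemma gives $\det\big(\mathbf{K}^{\prime}(\mathbf{K}^{\prime})^{T}\big) = \big(x+(n-1)y\big)(x-y)^{n-1}$, where $x-y = p^{2}$ and $x+(n-1)y = (p^{2}-p-1)+(p-2)(-p-1) = 1$, so the determinant equals $(p^{2})^{p-2} = p^{2(p-2)}$, and taking square roots yields $|\det\mathbf{K}^{\prime}| = p^{p-2}$.

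I expect the only genuine content to be the identification of the off-diagonal entries: one must verify that the inner product of two distinct rows always collapses to a sum $\sum_{u}{\cal K}(u){\cal K}(cu)$ with a single $c\neq 1$ — the same shape for \emph{every} off-diagonal cell — so that Proposition \ref{Lehmer}(iii) applies uniformly and the Gram matrix acquires the constant-diagonal, constant-off-diagonal form required by Lemma \ref{xy}. The passage to the symmetric Gram matrix is what makes both Lehmer identities directly relevant and, as a bonus, sidesteps all sign bookkeeping since only the absolute value is claimed.
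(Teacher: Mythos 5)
Your proposal is correct and is essentially the paper's own proof: the paper likewise applies Proposition \ref{Lehmer}(ii)--(iii) to show that $(\mathbf{K}^{\prime})^{2}$ has the constant-diagonal, constant-off-diagonal form of Lemma \ref{xy} with $x=p^{2}-p-1$, $y=-p-1$, and concludes $\det^{2}\mathbf{K}^{\prime}=p^{2(p-2)}$. You have merely written out the row-inner-product computation that the paper leaves implicit (and your argument even covers $p=2$ without a separate case, since the off-diagonal check is then vacuous).
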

\begin{proof}
If $p = 2$ the claim is trivial.
In case $p > 2$, using Proposition \ref{Lehmer}({\em ii})--({\em iii}), one shows that the matrix ${{\bf K}^{\prime}}^{2}$ satisfies the assumptions of Lemma \ref{xy} with $x = p^{2} - p - 1$ and $y = -p - 1$. Thus, $\det^{2} {\bf K}^{\prime} = p^{2(p-2)}$ which completes the proof.
\qed
\end{proof}
\begin{remark}
Note that the determinants of the matrices under consideration do not depend on particular chosen primitive element $g$ but only on the field characteristic.
\end{remark}

\medskip

\noindent
Now, we are in position to establish the main result.
\begin{theorem}\label{th1}
The quantities $T_{1s}, s = 1,\ldots,p-1$ can be found as the unique solution of linear system (\ref{eq.3}).
\end{theorem}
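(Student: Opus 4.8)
The plan is to recognize that the statement bundles together two logically distinct facts: that the cardinalities $T_{1s}$ solve the system (\ref{eq.3}), and that this system admits only one solution. The first is already established: the chain of elementary transformations preceding (\ref{eq.3}) shows that substituting the genuine counts $t_s = T_{1s}$ makes each left-hand side equal to ${\cal K}^{(m)}(u) + q + 1$, so the vector $(T_{1,1},\ldots,T_{1,p-1})$ is a solution by construction. Consequently the only thing left to prove is uniqueness, i.e. that the coefficient matrix of (\ref{eq.3}) is nonsingular.

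For uniqueness I would first pass to the equivalent left-circulant form (\ref{eq.4}) furnished by Proposition \ref{propequiv}; since equivalent systems in that sense have identical solution sets, it suffices to show that the circulant matrix $\mathbf{K}$ is invertible. Here the genuinely substantive work is already packaged into the two preceding determinant computations, and I would simply chain them: Lemma \ref{lem2} gives $\det \mathbf{K} = p^{2}\det \mathbf{K}^{\prime}$, while Proposition \ref{prop3} gives $|\det \mathbf{K}^{\prime}| = p^{p-2}$. Multiplying yields $|\det \mathbf{K}| = p^{2}\cdot p^{p-2} = p^{p} \neq 0$, so $\mathbf{K}$ is invertible over $\mathbb{R}$. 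Hence (\ref{eq.4}), and therefore (\ref{eq.3}), has exactly one solution.

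Combining the two parts closes the argument: (\ref{eq.3}) is a square linear system with a nonsingular matrix, so its unique solution necessarily coincides with the solution provided by the cardinalities $T_{1s}$. As a by-product this is effective, since the right-hand side of (\ref{eq.3}) is expressible through ${\cal K}(u)$, $m$ and $p$ via Proposition \ref{Carlitz}. I do not expect any real obstacle at this stage, precisely because the hard part—the nonvanishing of the determinant—has been delegated to Lemma \ref{lem2} and Proposition \ref{prop3}; the only point deserving a moment's care is keeping existence (proved earlier) and uniqueness (the determinant computation) cleanly separated, and noting that the equivalence in Proposition \ref{propequiv} preserves the number of solutions so that invertibility of the circulant form $\mathbf{K}$ indeed governs the original system.
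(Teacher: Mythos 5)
Your proposal is correct and follows essentially the same route as the paper: the paper's proof likewise combines Lemma \ref{lem2} and Proposition \ref{prop3} to conclude $|\det {\bf K}| = p^{p}$, hence invertibility of the coefficient matrix of the equivalent system (\ref{eq.4}) from Proposition \ref{propequiv}. Your explicit separation of existence (the counts satisfy the system by construction) from uniqueness (the nonvanishing determinant) is a slightly more careful write-up of the same argument.
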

\begin{proof}
Indeed, Lemma \ref{lem2} and Proposition \ref{prop3} immediately imply $|\det {\bf K}| = p^{p}$, i.e. the matrix  ${\bf K}$ of coefficients of equivalent system (\ref{eq.4}) (see, Proposition \ref{propequiv}) is invertible.
\qed
\end{proof}
\begin{remark}
Of course, the remaining $T_{ij}$ can be found by applying Lemma \ref{lemma1} and Proposition \ref{prmain}.
\end{remark}

\section{\bf Asymptotic behavior of the quantities $T_{ij}$}
\label{sect3}

In this section, at first, we will give a convenient matrix presentation of the unique solution of system (\ref{eq.4}). To this end, we introduce some additional notations, i.e. denote by ${\bf q} = (q+1, q+1,\ldots,q+1)$ and by ${\bf k}^{(m)} = \big({\cal K}^{(m)}(1), {\cal K}^{(m)}(g),\ldots,{\cal K}^{(m)}(g^{p-2})\big)$, the vectors of length $p-1$ with constant coordinate $q+1$ and of the Kloosterman sums over $\MF_q$, respectively.
Apparently, the system of interest has the following matrix form:
\[
{\bf x}{\bf K} =  {\bf q} + {\bf k}^{(m)},
\]
with ${\bf x} = (x_0,x_1,\ldots,x_{p-2})$ being the vector of unknowns.

\medskip

\noindent
If ${\bf R}$ is the inverse of matrix ${\bf K}$ (see, Theorem \ref{th1}) then the above matrix equation can be solved as follows:
\begin{equation}\label{eq.5}
{\bf x} =  {\bf q}{\bf R} + {\bf k}^{(m)}{\bf R}.
\end{equation}
\begin{lemma}
In the above notations, the product ${\bf q}{\bf R}$ equals to the vector $(\frac{q+1}{p^{2}},\frac{q+1}{p^{2}},\ldots,\frac{q+1}{p^{2}})$.
\end{lemma}
\begin{proof}
Since ${\bf R}$ is a left-circulant matrix and ${\bf q}$ is with constant coordinates, it can be easily seen that ${\bf q}{\bf R}$ has components equal to the constant $ (q+1) \times \sum_{l = 0}^{p-2}r_{l}$ where $r_0,r_1,\ldots,r_{p-2}$ are the elements of first column (row) of ${\bf R}$. But according to Proposition \ref{sum'}, it holds $\sum_{l = 0}^{p-2}r_{l} = 1/{\sum_{l=0}^{p-2}k_{l}}$. Finally, the proof is completed observing that the last sum $\sum_{l=0}^{p-2}k_{l}$ equals $p^{2}$ (see, part 1 of the proof of Lemma \ref{lem2}).
\qed
\end{proof}

\medskip

\noindent
The above proposition and Eq. (\ref{eq.5}) mean that each unknown can be found of the form:
\begin{equation}\label{eq.6}
x_{s} = \frac{q}{p^{2}}+\frac{1}{p^{2}} + \sum_{l=0}^{p-2}r_{s+l}{\cal K}^{(m)}(g^{l}),\; s = 0,\ldots,p-2,
\end{equation}
where the index $s+l$ is taken modulo $p-1$.

\begin{theorem}\label{th2}
$\lim_{m \to \infty}\frac{T_{1s}}{p^{m}} = {1 \over {p^2}}$ for each $1 \leq s \leq p-1$.
\end{theorem}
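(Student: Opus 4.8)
The plan is to start from the explicit solution formula (\ref{eq.6}) for each unknown $x_s = T_{1g^s}$ and analyze its behavior as $m \to \infty$ after dividing by $q = p^m$. Writing
\[
\frac{x_s}{p^m} = \frac{1}{p^2} + \frac{1}{p^{2m}} + \sum_{l=0}^{p-2} r_{s+l}\,\frac{{\cal K}^{(m)}(g^l)}{p^m},
\]
I would observe that the first term is already the claimed limit, the second term $1/p^{2m}$ vanishes as $m \to \infty$, so everything hinges on showing that the sum $\sum_{l=0}^{p-2} r_{s+l}\,{\cal K}^{(m)}(g^l)/p^m$ tends to $0$.

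First I would note that the entries $r_{s+l}$ of the inverse matrix ${\bf R} = {\bf K}^{-1}$ are fixed real constants depending only on $p$ (not on $m$), since ${\bf K}$ itself depends only on the single-extension Kloosterman sums ${\cal K}(g^l)$ and on $p$. Hence the number of summands is the constant $p-1$ and each coefficient $r_{s+l}$ is bounded. The key estimate is therefore on the size of ${\cal K}^{(m)}(g^l)/p^m$: invoking the André Weil bound recalled at the end of Section~\ref{sect1}, namely $|{\cal K}^{(m)}(u)| \leq 2\sqrt{p^m}$ for every $u \in \MF_q^*$, I get
\[
\left|\frac{{\cal K}^{(m)}(g^l)}{p^m}\right| \leq \frac{2\sqrt{p^m}}{p^m} = \frac{2}{p^{m/2}} \xrightarrow[m \to \infty]{} 0.
\]
Multiplying by the bounded coefficients and summing the constant number $p-1$ of terms, the triangle inequality yields
\[
\left|\sum_{l=0}^{p-2} r_{s+l}\,\frac{{\cal K}^{(m)}(g^l)}{p^m}\right| \leq \frac{2}{p^{m/2}}\sum_{l=0}^{p-2} |r_l| \xrightarrow[m \to \infty]{} 0,
\]
which finishes the argument.

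I do not expect any serious obstacle here; the proof is essentially a straightforward application of the Weil bound to the closed-form solution already derived. The only point requiring minor care is making explicit that the $r_l$ are $m$-independent constants (so that the sum has a fixed length and bounded coefficients as $m$ grows), which is immediate from Theorem~\ref{th1} and the remark that ${\bf K}$ depends only on the characteristic. One could alternatively route through Carlitz' formula (Proposition~\ref{Carlitz}) to estimate ${\cal K}^{(m)}(g^l)$ directly, but the Weil bound gives the cleaner and more transparent route to the limit, so that is the path I would take.
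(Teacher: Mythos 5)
Your argument is correct and is essentially identical to the paper's own proof: both start from Eq.~(\ref{eq.6}), divide by $p^{m}$, note that the entries $r_{l}$ of ${\bf R}$ are constants depending only on $p$, and apply the Weil bound $|{\cal K}^{(m)}(u)| \leq 2\sqrt{p^{m}}$ to kill the Kloosterman term. Your write-up merely spells out the triangle-inequality step that the paper leaves implicit.
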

\begin{proof}
We make use of Eq. (\ref{eq.6}). By the Weil bound on Kloosterman sums it follows: $|{\cal K}^{(m)}(u)|/p^{m} \leq 2/\sqrt{p^m},$ for each $u \in \MF_{p}$. This together with the fact that $r_0,r_1,\ldots,r_{p-2}$ are constants (depending only on characteristic $p$) implies that the behavior of ratio $\frac{T_{1s}}{p^{m}}$ when $m$ is sufficiently large is determined by the term $q/p^{m+2}$, i.e. it resembles $1/p^{2}$.
\qed
\end{proof}
\begin{corollary}
Both ratios $\frac{T_{00}}{p^{m}}$ and $\frac{T_{01}}{p^{m}}$ converge to $1/p^{2}$ when  $m$ goes to infinity.
\end{corollary}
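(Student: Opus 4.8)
The plan is to reduce both ratios to the already-established limit for the quantities $T_{1s}$ by means of the linear relations of Lemma \ref{lemma1}, and then to carry out the resulting elementary arithmetic. First I would recall from Corollary \ref{cor1.} that $T_{01} = T_{10}$, so that it suffices to handle $T_{10}$ and $T_{00}$ separately. Dividing Eq. (\ref{eq.1}) by $p^{m}$ and using $q = p^{m}$ gives
\[
\frac{T_{10}}{p^{m}} = \frac{1}{p} - \sum_{s = 1}^{p-1}\frac{T_{1s}}{p^{m}}.
\]
Letting $m \to \infty$ and invoking Theorem \ref{th2} for each of the $p-1$ summands yields the value $\frac{1}{p} - (p-1)\cdot\frac{1}{p^{2}} = \frac{1}{p^{2}}$, which settles the claim for $T_{01}$.

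For $T_{00}$ I would divide Eq. (\ref{eq.2}) by $p^{m}$, again with $q = p^{m}$, to obtain
\[
\frac{T_{00}}{p^{m}} = (p-1)\sum_{s = 1}^{p-1}\frac{T_{1s}}{p^{m}} + \frac{2}{p} - 1 - \frac{1}{p^{m}}.
\]
As $m \to \infty$ the last term vanishes, while Theorem \ref{th2} sends the remaining sum to $(p-1)\cdot\frac{1}{p^{2}}$. Hence the limit equals $\frac{(p-1)^{2}}{p^{2}} + \frac{2}{p} - 1$, which collapses to $\frac{1}{p^{2}}$ after clearing denominators, since $(p-1)^{2} + 2p - p^{2} = 1$.

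The argument is entirely elementary once Theorem \ref{th2} is available, so there is no genuine obstacle. The only point deserving a moment's care is the interchange of limit and summation: this is immediate because the number of summands $p-1$ is fixed and independent of $m$, so each of the finitely many ratios $T_{1s}/p^{m}$ may be passed to the limit termwise. The one remaining check is purely computational, namely confirming that the constant contributions combine to exactly $1/p^{2}$ rather than to some other $p$-dependent value; the identity $(p-1)^{2} + 2p - p^{2} = 1$ makes this transparent.
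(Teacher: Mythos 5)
Your proposal is correct and follows exactly the paper's route: the paper also derives the corollary directly from Lemma \ref{lemma1} and Theorem \ref{th2}, merely without writing out the arithmetic you supply. Your computations, including the identity $(p-1)^{2}+2p-p^{2}=1$, check out.
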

\begin{proof}
An immediate consequence of Lemma \ref{lemma1} and Theorem \ref{th2}.
\qed
\end{proof}

\section{\bf Examples}
\label{sect4}

\subsection{{\cal char} = 2}

\medskip

\noindent
Combining the solely (in this case) Eq. (\ref{eq.3}) and Carlitz' result, we get:
\begin{eqnarray}\label{eq.7}
T_{11} =  \frac{2^m + 1} {4} + \frac{1} {2^{m+1}} \sum_{r = 0}^{\lfloor m/2 \rfloor}(-1)^{m+r+1}{m \choose 2r} 7^{r}.
\end{eqnarray}
Also, by using Lemma \ref{lemma1} we obtain:
\[
T_{00} =  \frac{2^m - 3} {4} + \frac{1} {2^{m+1}} \sum_{r = 0}^{\lfloor m/2 \rfloor}(-1)^{m+r+1}{m \choose 2r} 7^{r}
\]
and
\[
T_{01} = \frac{2^m - 1} {2} - \frac{1} {2^m} \sum_{r = 0}^{\lfloor m/2 \rfloor}(-1)^{m+r+1}{m \choose 2r} 7^{r},
\]
respectively  \cite{Bor16}.
\begin{remark}
Formula (\ref{eq.7}) is obtained as a by-product in \cite{Nied90} without making use of Carlitz' result.
\end{remark}
{\bf Table 1.} in the Appendix shows some numerical results yielded by using the above formulae.

\subsection{{\cal char} = 3}

\medskip

\noindent
Solving system (\ref{eq.3}), we get:
\begin{align*}
T_{11} =  \frac{3^m + 1} {9} + \frac{ - {\cal K}^{(m)}(1) + 2{\cal K}^{(m)}(2)} {9},\\
T_{12}  =  \frac{3^m + 1} {9} + \frac{2{\cal K}^{(m)}(1) - {\cal K}^{(m)}(2)} {9}\\
\end{align*}
and by using Lemma \ref{lemma1}
\begin{align*}
T_{00} = \frac{3^m - 5} {9} +  \frac{2{\cal K}^{(m)}(1) + 2{\cal K}^{(m)}(2)}{9 },\\
T_{01} = \frac{3^m - 2} {9} - \frac{{\cal K}^{(m)}(1) + {\cal K}^{(m)}(2)}{9}.\\
\end{align*}
Now, Carlitz' result can be applied but we skip the final formulae due to their too cumbersome form.

\medskip

\noindent
{\bf Table 2.} in the Appendix presents some numerical results for this case.

\subsection{{\cal char} = 5}
The solution of system (\ref{eq.3}) in this case is:
\begin{align*}
T_{11}=&\frac{5^m+1}{25}{\small+}\frac{2{\cal K}^{(m)}(1)-2{\cal K}^{(m)}(3)+{\cal K}^{(m)}(4){\small+}
\phi{\small\times}\big({\cal K}^{(m)}(1)+2{\cal K}^{(m)}(2)-2{\cal K}^{(m)}(3)-{\cal K}^{(m)}(4)\big)}{25},\\
T_{12}=&\frac{5^m+1}{25}{\small+}\frac{{\cal K}^{(m)}(2)+2{\cal K}^{(m)}(3)-2{\cal K}^{(m)}(4){\small+}
\phi{\small\times}\big(2{\cal K}^{(m)}(1)-{\cal K}^{(m)}(2)+{\cal K}^{(m)}(3)-2{\cal K}^{(m)}(4)\big)}{25},\\
T_{13}=&\frac{5^m+1}{25}{\small+}\frac{-2{\cal K}^{(m)}(1)+2{\cal K}^{(m)}(2)+{\cal K}^{(m)}(3){\small+}
\phi{\small\times}\big(-2{\cal K}^{(m)}(1)+{\cal K}^{(m)}(2)-{\cal K}^{(m)}(3)+2{\cal K}^{(m)}(4)\big)}{25},\\
T_{14}=&\frac{5^m+1}{25}{\small+}\frac{{\cal K}^{(m)}(1)-2{\cal K}^{(m)}(2)+2{\cal K}^{(m)}(4){\small+}
\phi{\small\times}\big(-{\cal K}^{(m)}(1)-2{\cal K}^{(m)}(2)+2{\cal K}^{(m)}(3)+{\cal K}^{(m)}(4)\big)}{25},\\
\end{align*}
where $\phi = \frac{-1+\sqrt5}2$.\\
Finally, Lemma \ref{lemma1} gives:
\begin{align*}
T_{00}=&\frac{5^m-9}{25} + \frac{{\cal K}^{(m)}(1)+{\cal K}^{(m)}(2)+{\cal K}^{(m)}(3)+{\cal K}^{(m)}(4)}{25},\\
T_{01}=&\frac{5^m-4}{25} -\frac{{\cal K}^{(m)}(1)+{\cal K}^{(m)}(2)+{\cal K}^{(m)}(3)+{\cal K}^{(m)}(4)}{25}.\\
\end{align*}
\noindent
{\bf Table 3.} in the Appendix contains some numerical results for this case.

\section{\bf Conclusion}
\label{sect5}
In this paper, we describe an approach to computing the number of elements of the finite field $\MF_{q},\;q = p^{m}$
with prescribed trace and co-trace. This approach consists of reducing the problem under consideration to solving a linear system with
coefficient matrix which is a slight modification of circulant matrix formed by the Kloosterman sums over the prime
field $\MF_{p}$. Also, for that system we prove the uniqueness of its solution based on some deep properties of the
sums considered. Together with 1969's result of Carlitz giving explicit formulas for corresponding Kloosterman sums
over $\MF_{q}$ in terms of $m$ and $p$, this allows to find closed-form expressions for the quantities of interest.
The study of asymptotic behavior of these quantities shows that they resemble $p^{m-2}$ when $m$ is sufficiently large.
The last fact can be interpreted as the constraints imposed on the values of trace and co-trace are in some sense
independent to each other.

\section*{\it \bf Acknowledgments}
The research of the first author was supported, in part, by the Science Foundation of Sofia University under contract 80-10-74/20.04.2017. The research of the second and third authors was partially supported by the foundation "Georgi Tchilikov".

\bibliography{}

\newpage

\begin{appendix}
\begin{center}
\textsc{Appendix}
\end{center}
{\Large
\begin{table}\label{Table1}
\caption {Values of $T_{ij}$ for $2 \leq m \leq 10$, {\cal char} = $2$.}
\begin{center}
\begin{tabular}{|l|l|l|l|l|l|l|l|l|l|}
\hline
$m$ &   $2$   & $3$  &  $4$  &  $5$  &  $6$  &  $7$  &  $8$ & $9$ & $10$\\
\hline
$T_{00}$ &   $1$   & $0$ &  $3$ & $10$ &  $13$ &  $28$&  $71$ & $126$ & $241$\\
\hline
$T_{01}$  &  $0$   & $3$ &  $4$ &  $5$ &  $18$ &  $35$&  $56$ & $129$ & $270$\\
\hline
$T_{11}$  &  $2$   & $1$ &  $4$ & $11$ &  $14$ &  $29$&  $72$ & $127$ & $242$\\
\hline
\end{tabular}
\end{center}
\end{table}
\begin{table}\label{Table2}
\begin{center}
\caption {Values of $T_{ij}$ for $1 \leq m \leq 6$, {\cal char} = $3$.}
\begin{tabular}{|l|l|l|l|l|l|l|}
\hline
$m$ &   $1$   & $2$  &  $3$  &  $4$  &  $5$  &  $6$\\
\hline
$T_{00}$ &   $0$   & $2$ &  $2$ & $10$ &  $20$ &  $68$\\
\hline
$T_{01}$  &  $0$   & $0$ &  $3$ &  $8$ &  $30$ &  $87$\\
\hline
$T_{11}$  &  $1$   & $1$ &  $0$ & $13$ &  $31$ &  $72$\\
\hline
$T_{12}$  &  $0$   & $2$ &  $6$ & $6$ &  $20$ &  $84$\\
\hline
\end{tabular}
\end{center}
\end{table}
\begin{table}\label{Table3}
\begin{center}
\caption {Values of $T_{ij}$ for $1 \leq m \leq 6$, {\cal char} = $5$.}
\begin{tabular}{|l|l|l|l|l|l|l|}
\hline
$m$ &   $1$   & $2$  &  $3$  &  $4$  &  $5$  &  $6$\\
\hline
$T_{00}$ &   $0$   & $4$ &  $0$ & $28$ &  $164$ &  $628$\\
\hline
$T_{01}$  &  $0$   & $0$ &  $6$ &  $24$ &  $115$ &  $624$\\
\hline
$T_{11}$  &  $1$   & $2$ &  $0$ & $21$ &  $120$ &  $601$\\
\hline
$T_{12}$  &  $0$   & $0$ &  $6$ & $38$ &  $130$ &  $590$\\
\hline
$T_{13}$  &  $0$   & $2$ &  $6$ & $16$ &  $140$ &  $660$\\
\hline
$T_{14}$  &  $0$   & $1$ &  $7$ & $26$ &  $120$ &  $650$\\
\hline
\end{tabular}
\end{center}
\end{table}
}

\end{appendix}

\end{document}